\numberwithin{equation}{section}
\newtheorem{theorem}{Theorem}[section]
\newtheorem{lemma}[theorem]{Lemma}
\newtheorem{remark}[theorem]{Remark}
\newtheorem{claim}{Claim}
\newcommand{\R}{\mathbb{R}}
\newcommand{\dint}{\displaystyle\int}
\begin{document}

\title[Schr\"odinger equation asymptotically periodic]{Schr\"odinger equations with asymptotically periodic terms}

\author[Marchi, R.]{\textbf{Marchi, Reinaldo de}\\
 Departamento de Matem\'atica, Universidade Federal de Mato Grosso,Cuiab\'a 78060-900, MT, Brazil
(reinaldodemarchi@ufmt.br)}
\label{firstpage}
\maketitle

\begin{abstract}
We study the existence of nontrivial solutions for a class of asymptotically periodic 
semilinear Schr\"odinger equations in $\R^N$. By combining variational methods and the concentration-compactness principle we obtain a nontrivial solution for asymptotically 
periodic problem and a ground state solution for the periodic problem. In the proofs we apply the Mountain Pass Theorem and its local version.
\end{abstract}


\section{Introduction}

In this article, we study the existence of nontrivial solutions
for the semilinear Schr\"odinger equation
\begin{equation}\label{eq1}
-\Delta u+V(x)u=f(x,u),\ x\in\R^N,
\end{equation}
where $V:\R^N\to\R$ and $f:\R^N\times\R\to\R$ are continuous
functions. In our main result we establish the existence of
solution for the problem (\ref{eq1}) under an asymptotic
periodicity condition at infinity.

In order to precisely state our results we denote by $\mathcal{F}$
the class of functions $h\in C(\R^N,\R)\cap
L^\infty(\R^N,\mathbb{R})$ such that, for every $\varepsilon>0$,
the set $\{x\in\R^N: |h(x)|\geq\varepsilon\}$ has finite Lebesgue
measure. We suppose that $V$ is a perturbation of a periodic
function at the infinity in the following sense:

\begin{itemize}
\item[$(V)$]  there exist a constant $a_0>0$ and a function $V_0\in
C(\R^N,\R)$, 1-periodic in $x_i$, $1\leq i\leq N$, such that
$V_0-V\in\mathcal{F}$ and
$$V_0(x)\geq V(x)\geq a_0>0,\ \text{for all}\ x\in\R^N.$$
\end{itemize}

Considering $F(x,t)=\int_0^t f(x,s)ds$ the primitive of $f\in
C(\R^N\times\R,\R)$, we also suppose the following hypotheses:
\begin{itemize}
\item[$(f_1)$] $F(x,t)\geq0$ for all $(x,t)\in\R^N\times\R$ and $f(x,t)=o(t)$ uniformly in $x\in\R^N$ as $t\to0$;
\item[$(f_2)$] there exists a function $b\in C(\R\setminus\{0\},\R^+)$ such that
$$
\widehat{F}(x,t) := \frac{1}{2} f(x,t)t - F(x,t) \geq b(t)t^2,
$$
for all $(x,t)\in\R^N\times\R$;
\item[$(f_3)$] there exist $a_1>0,\,R_1>0$ and $\tau>\max\{1,N/2\}$ such that
$$|f(x,t)|^\tau\leq a_1|t|^\tau\widehat{F}(x,t),
$$
for all $(x,t)$ with $|t|>R_1$ ;
\item[$(f_4)$] uniformly in $x\in\R^N$ it holds
$$
\lim_{|t| \to +\infty} \frac{F(x,t)}{t^2}= +\infty ;
$$
\item[$(f_5)$] there exist $q\in(2,2^*)$ and functions $h\in\mathcal{F}$, $f_0\in C(\R^N\times\R,\R)$, 1-periodic in $x_i$, $1\leq i\leq N$, such that:
\begin{itemize}
\item[(i)] $F(x,t)\geq F_0(x,t)=\int_0^t f_0(x,s)ds$, for all $(x,t)\in\R^N\times\R$;
\item[(ii)] $|f(x,t)-f_0(x,t)|\leq h(x)|t|^{q-1}$,  for all $(x,t)\in\R^N\times\R$;
\item[(iii)] $\frac{f_0(x,\cdot)}{|\cdot|}$ is increasing in $\mathbb{R} \setminus \{0\}$, for all $x \in \mathbb{R}^N$.
\end{itemize}
\end{itemize}

The main result of this paper can be stated as follows:

\begin{theorem}\label{th1}
Suppose that $V$ and $f$ satisfy $(V)$ and $(f_1)-(f_5)$,
respectively. Then the problem $(\ref{eq1})$ possesses a solution.
\end{theorem}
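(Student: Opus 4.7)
The plan is to apply the Mountain Pass Theorem to the action functional
$$
I(u)=\tfrac12\int_{\R^N}\bigl(|\nabla u|^2+V(x)u^2\bigr)\,dx-\int_{\R^N}F(x,u)\,dx
$$
on $E=H^1(\R^N)$, equipped with the equivalent norm $\|u\|^2=\int(|\nabla u|^2+Vu^2)\,dx$ whose equivalence is provided by $(V)$. First I would verify the MP geometry: $(f_1)$ yields $F(x,t)=o(t^2)$ at the origin and hence $I(u)\ge\alpha>0$ on a small sphere (after invoking the subcritical growth that follows from $(f_3)$ and $(f_5)(ii)$), while the superquadratic condition $(f_4)$ gives $I(t\varphi)\to-\infty$ as $t\to+\infty$ for any nontrivial $\varphi\ge0$ in $C_c^\infty$. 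The Cerami version of the MP theorem then produces a sequence $(u_n)\subset E$ with $I(u_n)\to c>0$ and $(1+\|u_n\|)\|I'(u_n)\|_{E^*}\to0$.

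I would next show that $(u_n)$ is bounded. Assuming otherwise and setting $w_n=u_n/\|u_n\|\rightharpoonup w$, the Cerami relation gives $\int\widehat F(x,u_n)\,dx=I(u_n)-\tfrac12I'(u_n)[u_n]\to c$; splitting $\R^N$ into $\{|u_n|\le R_1\}$, where $(f_2)$ controls contributions from the ``small'' part of $w_n$, and $\{|u_n|>R_1\}$, where $(f_3)$ combined with H\"older provides a usable bound on $\int f(x,u_n)u_n\,dx/\|u_n\|^2$, one derives a contradiction with $(f_4)$ (a standard Jeanjean-type argument). Once bounded, $u_n\rightharpoonup u$ and one verifies that $u$ is a critical point of $I$; the perturbation term $\int(f(x,u_n)-f_0(x,u_n))\phi\,dx$ is absorbed by bounding it via $h\in\mathcal F$ and using that $h$ acts as a compact perturbation along $H^1$-weakly convergent sequences.

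The crucial step is to exclude $u\equiv0$. For this I would compare $c$ with the MP level $c_0$ of the periodic functional $I_0$ obtained by replacing $V,F$ with $V_0,F_0$. The monotonicity $(f_5)(iii)$ together with $(f_4)$ allows one to identify $c_0$ with the ground-state level of $I_0$; the $\mathbb Z^N$-invariance of $I_0$ and Lions' concentration--compactness lemma produce a ground state $v_0\not\equiv0$. If $V\equiv V_0$ and $f\equiv f_0$ the theorem is already proved; otherwise $(V)$ and $(f_5)(i)$ yield $I(tv_0)<I_0(tv_0)$ for every $t>0$, whence $c\le\max_{t\ge0}I(tv_0)<I_0(v_0)=c_0$. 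Assume now for contradiction that $u\equiv0$. Since $c>0$, Lions' lemma rules out vanishing and produces $y_n\in\mathbb Z^N$ and $r>0$ with $\liminf\int_{B_r(y_n)}u_n^2>0$; the translates $v_n(x)=u_n(x+y_n)$ have a nontrivial weak limit $v$. Using $V_0-V,h\in\mathcal F$ and $u_n\rightharpoonup0$, one shows $I_0(u_n)-I(u_n)\to0$ and $\|I_0'(u_n)-I'(u_n)\|_{E^*}\to0$; by $\mathbb Z^N$-invariance of $I_0$ this transfers to $I_0(v_n)\to c$ and $I_0'(v_n)\to 0$, so $v$ is a nontrivial critical point of $I_0$, and Fatou applied via $(f_2)$ gives $I_0(v)\le c<c_0$, contradicting the ground-state nature of $c_0$.

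The main obstacle I expect is the careful handling of the perturbation terms coming from $\mathcal F$: verifying that $u$ is indeed critical for $I$ in the asymptotically periodic setting, and transferring the Cerami information from $I$ to $I_0$ along the translated sequence. Both rely on the fact that functions in $\mathcal F$ act as compact perturbations on weakly null sequences in $H^1(\R^N)$; once this is established, the $\mathbb Z^N$-invariance of $I_0$ and the strict comparison $c<c_0$ close the argument.
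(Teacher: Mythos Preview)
Your outline matches the paper through the construction of the Cerami sequence, its boundedness, the weak limit $u$, and the translation argument producing a nontrivial critical point $\widetilde u$ of the periodic functional $I_0$ with $I_0(\widetilde u)\le c$. The divergence is in how the case $u\equiv 0$ is closed.

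You argue by strict comparison: first establish that $I_0$ has a ground state $v_0$ at level $c_0$ (via $(f_5)(iii)$ and a Nehari identification), then split into the periodic case and the strictly asymptotically periodic case, and in the latter use $I(tv_0)<I_0(tv_0)$ to get $c<c_0$; the inequality $I_0(\widetilde u)\le c<c_0$ then contradicts the minimality of $c_0$. This is a legitimate strategy, but it requires two ingredients that are \emph{not} transparent consequences of $(f_1)$--$(f_5)$: (i) the superquadratic growth $F_0(x,t)/t^2\to\infty$ (needed for the Nehari/MP identification $c_0=\min\{I_0(w):I_0'(w)=0,\ w\neq0\}$ and even for the path $t\mapsto t v_0$ to be admissible), which does not follow directly since $(f_5)(ii)$ only controls $|f-f_0|$ by $h(x)|t|^{q-1}$ and $q$ may exceed the growth exponent coming from $(f_3)$; and (ii) the strict inequality $c<c_0$, which needs a careful argument (positivity of the periodic ground state plus translation to where the perturbation is active) and a case distinction.

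The paper bypasses both issues. Having obtained $\widetilde u\neq 0$ with $I_0'(\widetilde u)=0$ and $I_0(\widetilde u)\le c$, it uses only $(f_5)(iii)$ to get $\max_{t\ge0}I_0(t\widetilde u)=I_0(\widetilde u)$, and then $(V)$ and $(f_5)(i)$ give $I(t\widetilde u)\le I_0(t\widetilde u)$ for all $t$; together with the minimax definition of $c$ this forces the \emph{equality} $\max_{t\ge0}I(t\widetilde u)=c$. A Local Mountain Pass Theorem (if a single admissible path realizes the minimax value, then $I$ has a critical point on that path at level $c$) then produces the desired solution of the original problem, without ever analyzing the ground-state level of $I_0$ or invoking a strict inequality. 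So your route is viable under slightly stronger information on $f_0$, but the paper's device is what makes the theorem go through under the hypotheses exactly as stated.
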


As a by product of our calculations we can obtain a weak solution
for the periodic problem. In this setting we can drop the
condition $(f_5)$, and we shall prove the following result:

\begin{theorem}\label{th2}
Suppose that $V(\cdot)$ and $f(\cdot,t)$ are 1-periodic in $x_i$,
$1 \leq i \leq N$, and $V(x) \geq a_0>0$ for all $x \in
\mathbb{R}^N$. If $f$ satisfies $(f_1)$, $(f_3)$, $(f_4)$ and
\begin{itemize}
\item[$({f_2})'$] $\widehat{F}(x,t)>0$ for all $t\neq0$,
\end{itemize}
then the problem $(\ref{eq1})$ possesses a ground states solution.
\end{theorem}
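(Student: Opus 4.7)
The plan is to realise solutions of (\ref{eq1}) as critical points of the $C^1$ functional
$$I(u) = \frac12\|u\|^2 - \int_{\mathbb{R}^N} F(x,u), \qquad \|u\|^2 := \int_{\mathbb{R}^N}(|\nabla u|^2 + V(x)u^2),$$
on $H := H^1(\mathbb{R}^N)$; periodicity and positivity of $V$ make $\|\cdot\|$ equivalent to the standard norm. Conditions $(f_1)$ and $(f_4)$, together with the subcritical growth implicit in $(f_3)$, yield the mountain pass geometry in the usual way: $I$ has a strict local minimum at $0$, and $I(t\varphi)\to -\infty$ along any ray $\varphi\neq 0$ by $(f_4)$. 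I would then apply the mountain pass theorem without a compactness assumption to produce a Cerami sequence $(u_n)\subset H$ at the mountain pass level $c>0$.

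The central technical step is the boundedness of $(u_n)$. Assuming $\|u_n\|\to\infty$, I set $v_n:=u_n/\|u_n\|$ and extract a subsequence with $v_n\rightharpoonup v$ in $H$ and $v_n\to v$ almost everywhere. If $v\not\equiv 0$, then on $\{v\neq 0\}$ one has $|u_n|\to\infty$, and $(f_4)$ combined with Fatou forces $I(u_n)/\|u_n\|^2\to -\infty$, contradicting $I(u_n)\to c$. If $v\equiv 0$, Lions' vanishing lemma gives $v_n\to 0$ in every $L^s$, $s\in(2,2^*)$; here the exponent $\tau>\max\{1,N/2\}$ in $(f_3)$ is tailored to a H\"older estimate that controls $\int f(x,u_n)u_n$ in terms of $\int\widehat F(x,u_n)=I(u_n)-\tfrac12 I'(u_n)u_n\to c$, and together with the Cerami identity $\|u_n\|^2-\int f(x,u_n)u_n=o(1)$ this forces a contradiction. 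Once $(u_n)$ is bounded, the $\mathbb{Z}^N$-invariance of $I$ supplied by the periodicity of $V$ and $f$ allows the usual non-vanishing dichotomy: either $u_n\to 0$ in every $L^s$, $s\in(2,2^*)$, which (via the subcritical growth of $f$) forces $\|u_n\|\to 0$ and hence $c=0$, impossible; or there exist integer translations $y_n\in\mathbb{Z}^N$ such that $\tilde u_n(\cdot):=u_n(\cdot+y_n)\rightharpoonup \tilde u\neq 0$ along a subsequence, and $\tilde u$ is a nontrivial weak solution of (\ref{eq1}).

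To extract a ground state, set $\mathcal K:=\{u\in H\setminus\{0\}:I'(u)=0\}$ and $m:=\inf_{\mathcal K} I$. The previous step shows $\mathcal K\neq\emptyset$ and hence $m\leq c$, while $(f_2)'$ gives $I(u)=\int\widehat F(x,u)>0$ for every $u\in\mathcal K$, so $m\geq 0$. Taking a minimising sequence $(w_n)\subset\mathcal K$, which is automatically a Palais--Smale sequence at level $m$, I repeat the boundedness and translation arguments to obtain, after shifting by integer vectors, $\tilde w_n\rightharpoonup w\neq 0$ with $I'(w)=0$. Finally, Fatou applied to the nonnegative integrand $\widehat F(x,\cdot)$ yields
$$I(w)=\int_{\mathbb{R}^N}\widehat F(x,w)\leq\liminf_{n\to\infty}\int_{\mathbb{R}^N}\widehat F(x,\tilde w_n)=\liminf_{n\to\infty}I(\tilde w_n)=m,$$
so $w$ achieves $m$ and is the required ground state.

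The hard part will be the boundedness of the Cerami and Palais--Smale sequences under the weakened nonlinearity hypothesis $(f_3)$ in place of the classical Ambrosetti--Rabinowitz condition: one must steer the dichotomy on $v_n=u_n/\|u_n\|$ so that both the vanishing and the non-vanishing alternatives are excluded using only $(f_3)$ and $(f_4)$. Once that step is in place, the periodicity of $V$ and $f$ reduces everything to a standard translation/concentration argument, and the minimisation over $\mathcal K$ together with $(f_2)'$ and Fatou takes care of the ground state statement.
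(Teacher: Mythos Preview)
Your overall architecture---mountain pass geometry, boundedness of Cerami sequences, Lions' dichotomy with $\mathbb{Z}^N$-translation to produce a nontrivial critical point, and then minimisation over the critical set with Fatou applied to $\widehat F$---coincides with the paper's. Two points in your boundedness sketch need correction.

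First, the split on $v_n=u_n/\|u_n\|$ is not ``$v\neq 0$ versus $v=0$'': weak convergence $v_n\rightharpoonup 0$ does \emph{not} imply $v_n\to 0$ in $L^s$. In the periodic setting the correct dichotomy is Lions' vanishing versus non-vanishing; in the non-vanishing branch you must first translate by integer vectors to manufacture a nonzero weak limit before running the Fatou-plus-$(f_4)$ argument.

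Second, and more substantively, in the vanishing branch your H\"older estimate via $(f_3)$ only controls the region $\{|u_n|>R_1\}$. On the set $\{|u_n|\le R_1\}$ the contribution
\[
\int_{\{|u_n|\le R_1\}}\frac{f(x,u_n)}{u_n}\,v_n^2
\]
is merely bounded, not small, so you cannot contradict $\int f(x,u_n)v_n/\|u_n\|\to 1$ using only $(f_3)$ and $(f_4)$ as you claim. The missing ingredient is precisely $(f_2)'$: periodicity and continuity give $\widehat F(x,t)\ge c_a^b\,t^2>0$ on each annulus $a\le|t|\le b$, whence $\int_{\{a\le|u_n|\le b\}}u_n^2\le C/c_a^b$ is bounded and therefore $\int_{\{a\le|u_n|\le b\}}v_n^2\to 0$. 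The paper proves boundedness by exactly this three-region decomposition (small, intermediate, and large $|u_n|$), following Ding--Lee, rather than through your weak-limit dichotomy; once the intermediate-region estimate coming from $(f_2)'$ is inserted, either route goes through, and the remainder of your plan matches the paper essentially line by line.
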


Problems as $(\ref{eq1})$ has been focus of intensive research in
recent years. Initially, several authors have dealt with the case
where $f$ behaves like $q(x)|u| ^{p-1}u, 1<p<2^*-1$ and $V$ is
constant (see ~\cite{berestycki1983nonlinear,bahri1990min}). In the work of Rabinowitz ~\cite{rabinowitz1992class}
and Rabinowitz-Coti Zelati ~\cite{zelati1992homoclinic} it was imposed the classical
superlinear condition due to Ambrosetti-Rabinowitz:

\begin{itemize}
\item[$(AR)$] there exists $\mu>2$ such that
$$
0<\mu F(x,t)\leq f(x,t)t
$$
for all $x\in\R^N$ and $t\neq0$.
\end{itemize}
This hypothesis has a important role to show that (PS) sequences
are bounded. In this work we assume the condition $(f_4)$ which is
weaker than the condition of $(AR)$. It has already appeared in
the papers of Ding-Lee ~\cite{ding2006multiple} and Ding-Szulkin
~\cite{ding2007bound}.

We emphasize that, in Theorem \ref{th1}, we are not supposing
periodicity on $V$ or $f(\cdot,t)$. Instead, we consider the
asymptotically periodic case as done in the paper of Lins-Silva
\cite{lins2009quasilinear}. The condition $(f_5)$ describes our
assumption of asymptotically periodic for the nonlinearity $f$. A
pioneering work on problems as $(\ref{eq1})$ is due Alama-Li
~\cite{alama1992multibump} that focused the case $V\equiv1$ and
$f$ asymptotically periodic in a weaker sense. We also cite the
papers \cite{alves2001perturbations, alves2001nonlinear,
lins2009quasilinear, silva2010quasilinear, silva2010quasilinear2}
for some related (and not comparable) results.

As an example of application of our main theorem we take $a\in
C(\R^N,\R)\cap L^{\infty}(\mathbb{R}^N,\mathbb{R})$ 1-periodic in
$x_i$, $1\leq i\leq N$ with $a(x)\geq2$. Define the functions
$$f(x,t)=a(x)t\ln(1+t)+e^{-|x|^2}t(\ln(1+t)+1-\cos(t)),\ t\geq0,$$
$$f_0(x,t)=a(x)t\ln(1+t),\ t\geq0,$$
and $f(x,t)=-f(x,-t)$, $f_0(x,t)=-f_0(x,-t)$ for $t<0$. This
function satisfies $(f_1)-(f_5)$, but not satisfies $(AR)$.
Moreover $f(x,t)/t$ is oscillatory, and therefore the Nehari
approach used in \cite{szulkin2010method}  is not applicable.

The rest of the article is organized as follows. In Section 2 we
present the technical results that be used throughout the work.
The final Section 3 is devoted to the proof of Theorems \ref{th1}
and \ref{th2}.

\section{Preliminary Results}

In this section we present some preliminaries for the proofs of
our main theorems. We denote by $B_R(y)$ the open ball in $\R^N$
of radius $R>0$ and center at the point $y$. The Lebesgue measure
of a set $A\subset\R^N$ will be denoted by $|A|$. To shorten
notation, write  $\int_A u$ instead of $\int_A u(x)dx$. We also
omit the set $A$ whenever $A=\R^N$. We write $|\cdot|_p$ for the
norm in $L^p(\R^N)$.

Throughout the paper we assume that the potential $V$ satisfies
the assumption $(V)$. This implies that the norm
$$\|u\|^2=\int(|\nabla u|^2+V(x)u^2),~~~u \in H^1(\R^N)
$$
is equivalent to the usual one. In what follows we denote by $H$
the space $H^1(\mathbb{R}^N)$ endowed with the above norm.

In our first lemma we obtain the basic estimates
on the behavior of the nonlinearity $f$.

\begin{lemma}\label{lemma1.1}
Suppose that $f$ satisfies $(f_1)$, $(f_3)$ and $(f_5)(ii)$. Then,
for any given $\varepsilon>0$, there exists $C_\varepsilon>0$ and
$p \in (2,2^*)$ such that
\begin{equation} \label{c1}
|f(x,t)|\leq\varepsilon|t|+C_\varepsilon|t|^{p-1},\,\,\,\,
|F(x,t)|\leq\varepsilon|t|^2+C_\varepsilon|t|^p,
\end{equation}
for all $(x,t)\in\R^N\times\R$.
\end{lemma}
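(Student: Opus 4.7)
The plan is to split $\R$ into three zones according to $|t|$, establish the bound $|f(x,t)| \leq \varepsilon|t| + C_\varepsilon|t|^{p-1}$ on each, and then integrate to obtain the second inequality. The natural choice for the exponent will turn out to be $p = 2\tau/(\tau-1)$; one checks $p>2$ trivially, and $p<2^*$ is equivalent to $\tau>N/2$ when $N\geq 3$, the cases $N\leq 2$ being automatic since $2^*=\infty$. This is exactly the role of the condition $\tau>\max\{1,N/2\}$ in $(f_3)$.

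Near zero, I would use $(f_1)$ directly: for the prescribed $\varepsilon>0$, pick $\delta>0$ such that $|f(x,t)|\leq\varepsilon|t|$ whenever $|t|\leq\delta$, uniformity in $x$ being part of the hypothesis. For $|t|>R_1$, I would combine $(f_3)$ with $F\geq 0$ (from $(f_1)$): the identity $\widehat F(x,t)=\tfrac12 f(x,t)t - F(x,t)$ together with $F\geq 0$ gives $\widehat F(x,t)\leq \tfrac12|f(x,t)||t|$, so $(f_3)$ becomes
$$|f(x,t)|^\tau \leq \frac{a_1}{2}|t|^{\tau+1}|f(x,t)|.$$
Dividing by $|f(x,t)|$ (the case $f=0$ being trivial) yields $|f(x,t)|\leq C|t|^{p-1}$ with $p=2\tau/(\tau-1)$.

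The main obstacle is the intermediate slab $\delta\leq |t|\leq R_1$. Continuity alone does not suffice here, because $f$ lives on the noncompact space $\R^N\times[-R_1,R_1]$ and we need a bound uniform in $x$. This is precisely where $(f_5)(ii)$ enters: writing $|f(x,t)|\leq |f_0(x,t)|+h(x)|t|^{q-1}$, the periodicity and continuity of $f_0$ make $|f_0|$ uniformly bounded on $\R^N\times[-R_1,R_1]$, while $h\in L^\infty$ (since $h\in\mathcal F$) together with $|t|\leq R_1$ controls the second term. This produces a uniform constant $M$ with $|f(x,t)|\leq M$ on the slab, which for $|t|\geq\delta$ is dominated by $(M/\delta^{p-1})|t|^{p-1}$. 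Absorbing this into $C_\varepsilon$ gives the first inequality globally.

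The second inequality follows by integrating the first: $|F(x,t)|\leq\int_0^{|t|}(\varepsilon s+C_\varepsilon s^{p-1})\,ds = \tfrac{\varepsilon}{2}t^2+\tfrac{C_\varepsilon}{p}|t|^p$, after which one renames the constants (for instance replacing $\varepsilon/2$ by $\varepsilon$ and $C_\varepsilon/p$ by a new $C_\varepsilon$). Note that $(f_2)$ is not needed anywhere, consistent with the hypotheses of the lemma.
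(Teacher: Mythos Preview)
Your argument is correct and mirrors the paper's proof almost exactly: the same three-zone decomposition, the same derivation of $p=2\tau/(\tau-1)$ from $(f_3)$ via $\widehat F\leq\tfrac12|f||t|$, and the same use of $(f_5)(ii)$ together with periodicity of $f_0$ to control the intermediate slab. The only cosmetic difference is that the paper writes the slab bound as a multiple of $|t|^{q-1}$ rather than first obtaining a constant and then converting to $|t|^{p-1}$, which is immaterial on a bounded $|t|$-range.
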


\begin{proof}
Taking $\varepsilon>0$ and using $(f_1)$, we obtain $\delta>0$
such that
\begin{equation}\label{l11}
|f(x,t)|\leq\varepsilon |t|,\ x\in\R^N,\ |t|\leq\delta.
\end{equation}
By $(f_3)$ there exists $R>0$ satisfying
$$|f(x,t)|^\tau \leq a_1|t|^\tau\widehat{F}(x,t)\leq\frac{a_1}2|t|^{\tau+1}|f(x,t)|,\
x\in\R^N,\ |t|\geq R.$$ Then, setting $p=2\tau/(\tau-1)$, we can
use $\tau>N/2$ to conclude that $2<p<2^*$. Moreover,
\begin{equation}\label{l12}
|f(x,t)|\leq C|t|^{\frac{\tau+1}{\tau-1}}= C|t|^{p-1},\ x\in\R^N,\
|t|\geq R.
\end{equation}
From the continuity and periodicity of $f_0$ we obtain $M>0$ such
that
$$
|f_0(x,t)|\leq M,\ x\in\R^N,\ \delta\leq|t|\leq R.
$$
Now, using $(f_5)(ii)$ we get
$$
|f(x,t)|\leq
\|h\|_\infty|t|^{q-1}+M\leq\left(\|h\|_\infty+\frac{M}{\delta^{q-1}}\right)|t|^{q-1},\
x\in\R^N,\ \delta\leq|t|\leq R.
$$
This, \eqref{l11} and \eqref{l12} proves the first inequality in
\eqref{c1}. The second one follows directly by integration.
\end{proof}

In view of the above lemma it is well defined the
functional $I: H\to\R$ given by
$$
I(u)=\frac12\|u\|^2-\int F(x,u).
$$
Moreover, standard calculations show that $I \in
C^1(H,\mathbb{R})$ and the Gateaux derivative of $I$ has the
following form
$$
I'(u)v=\int (\nabla u\nabla v+V(x)uv)-\int f(x,u)v,
$$
for any $u,\,v \in H$. Hence, the critical points de $I$ are
precisely the weak solutions of the problem (\ref{eq1}).

In order to obtain the desired critical points we
shall use the following abstract result. We refer to \cite[Theorem
2.3]{lins2009quasilinear}.

\begin{theorem}[Local Mountain Pass Theorem]\label{lmpt}
Let $E$ be a real Banach space. Suppose that $I\in C^1(E,\R)$
satisfies $I(0)=0$ and
\begin{enumerate}
\item[$(I_1)$] there exist $\rho, \alpha>0$ such that $I(u)\geq\alpha>0$ for all $\|u\|=\rho$,
\item[$(I_2)$] there exist $e\in E$ with $\|e\|>\rho$ such that $I(e)\leq0$.
\end{enumerate}
If there exists $\gamma_0\in\Gamma=\{\gamma\in C([0,1], E):
\gamma(0)=0, \|\gamma(1)\|>\rho, I(\gamma(1))\leq0\}$ such that
$$
c=\max_{t\in[0,1]}I(\gamma_0(t))>0
$$
then $I$ possesses a nontrivial critical point
$u\in\gamma_0([0,1])$ at the level $c$.
\end{theorem}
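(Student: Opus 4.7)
My plan is to argue by contradiction via a quantitative deformation argument. Suppose that $I$ has no critical point at level $c$ on the compact set $K := \gamma_0([0,1])$. Since $K$ is compact in $E$ and disjoint from the closed set $\{u \in E : I(u) = c,\ I'(u) = 0\}$, by continuity of $I'$ there exist $\delta, \epsilon > 0$, with $2\epsilon < c$, and an open neighborhood $\mathcal{N}$ of $K$ such that $\|I'(u)\|_{E^*} \geq \delta$ for every $u \in \mathcal{N}$ with $|I(u) - c| \leq 2\epsilon$. The role of the compactness of $K$ here is essential: without it one could not extract a uniform lower bound on $\|I'\|$, which is the key ingredient that substitutes for a Palais--Smale hypothesis.

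Using a locally Lipschitz pseudo-gradient vector field for $I$ supported in $\mathcal{N}$ and cut off away from level $c$, I would invoke the standard quantitative deformation lemma (as in Willem's book) to produce a continuous map $\eta \in C([0,1]\times E, E)$ with the following properties: $\eta(s,u)=u$ whenever $|I(u)-c|\geq 2\epsilon$ or $u$ lies outside $\mathcal{N}$; the function $s\mapsto I(\eta(s,u))$ is non-increasing; and $I(\eta(1,u))\leq c-\epsilon$ whenever $I(u)\leq c+\epsilon$ and $u$ belongs to a slightly smaller neighborhood of $K$. Since $(I_1)$--$(I_2)$ and the definition of $\Gamma$ give $I(\gamma_0(0))=0$ and $I(\gamma_0(1))\leq 0$, both endpoints have level well below $c-2\epsilon$ and are fixed by $\eta(1,\cdot)$. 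Hence the deformed path $\gamma_1(t):=\eta(1,\gamma_0(t))$ again belongs to $\Gamma$, and by construction $\max_t I(\gamma_1(t))\leq c-\epsilon<c$.

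The main obstacle, and the subtle point that distinguishes this \emph{local} version from the classical mountain pass theorem, is that the conclusion asserts a critical point \emph{on} the image $\gamma_0([0,1])$ rather than merely somewhere in $E$ at the minimax level. To force this localization I would keep the pseudo-gradient field supported in an arbitrarily thin tube around $K$, so that the deformation essentially acts only on $\gamma_0$ itself; the contradiction is then that $\gamma_1\in\Gamma$ has strictly lower maximum than $\gamma_0$, incompatible with the hypothesis that $c=\max_t I(\gamma_0(t))$ is the obstacle separating the two sides of the mountain along this specific path. The delicate step I expect to spend the most care on is the rigorous implementation of this localization, so that the deformation preserves membership in $\Gamma$ while still producing the strict drop in the maximum; this is the point at which the compactness of $K$ and the specific structure of $\Gamma$ must be used together.
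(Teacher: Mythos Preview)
The paper does not actually prove this theorem; it simply refers the reader to Theorem~2.3 of Lins--Silva. So there is no proof inside the paper to compare your attempt against.

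Your deformation strategy is the standard route for results of this type and is almost certainly what the cited reference does. However, there is a genuine gap at your contradiction step. You deform $\gamma_0$ to a path $\gamma_1 \in \Gamma$ with $\max_t I(\gamma_1(t)) \leq c - \epsilon$ and call this ``incompatible with the hypothesis that $c=\max_t I(\gamma_0(t))$''. But that hypothesis only says that $c$ is the maximum of $I$ along the \emph{particular} path $\gamma_0$; nothing in the statement, as written, forbids some other path in $\Gamma$ from having a smaller maximum. Producing such a $\gamma_1$ contradicts nothing unless you also know that
\[
c \;=\; \inf_{\gamma \in \Gamma}\ \max_{t\in[0,1]} I(\gamma(t)),
\]
i.e.\ that $\gamma_0$ is an \emph{optimal} path realising the minimax value. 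Read literally, the theorem is in fact false without this extra hypothesis: in $E=\R^2$ take $I(x,y)=(x^2+y^2)(1-x)$, which satisfies $(I_1)$--$(I_2)$ and whose only nonzero critical point is $(2/3,0)$, and choose any $\gamma_0\in\Gamma$ joining the origin to $(2,0)$ that avoids the $x$-axis in between.

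The way the result is actually \emph{applied} in the paper, at the very end of the proof of Theorem~\ref{th1}, does establish that the chosen path achieves the mountain-pass minimax level $c$; so the intended hypothesis is clearly that $\gamma_0$ is optimal. Build your argument on that reading: once $c$ equals the minimax over $\Gamma$, your compactness-plus-deformation scheme goes through, and the contradiction is simply that $\gamma_1\in\Gamma$ violates the minimality of $c$. The localisation of the pseudo-gradient to a tube around $K$ that you worry about is then unnecessary for the contradiction itself; it matters only if you insist on the sharper conclusion that the critical point lies on $\gamma_0([0,1])$, which follows because the deformation never leaves that tube and hence any obstruction to lowering the maximum must already sit on $K$.
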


In the next result we prove that the functional $I$ verifies the
geometric conditions of the Mountain Pass Theorem.

\begin{lemma}\label{geometry}
Suppose that $f$ satisfies $(f_1)$, $(f_3)$, $(f_4)$ and
$(f_5)(ii)$. Then $I$ satisfies $(I_1)$ and $(I_2)$.
\end{lemma}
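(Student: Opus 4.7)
\medskip
\noindent\textbf{Plan.} The plan is to verify the two geometric conditions separately, using Lemma \ref{lemma1.1} for $(I_1)$ and the superquadratic hypothesis $(f_4)$ for $(I_2)$.

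\medskip
\noindent\emph{Verification of $(I_1)$.} Fix $\varepsilon>0$ to be chosen later. By Lemma \ref{lemma1.1} there exist $C_\varepsilon>0$ and $p\in(2,2^*)$ with
$$
\int F(x,u)\leq \varepsilon\,|u|_2^2+C_\varepsilon\,|u|_p^p.
$$
Since the norm $\|\cdot\|$ is equivalent to the standard $H^1$-norm (assumption $(V)$), the Sobolev embeddings $H\hookrightarrow L^2$ and $H\hookrightarrow L^p$ give constants $S_2,S_p>0$ with
$$
I(u)\geq \tfrac12\|u\|^2-\varepsilon S_2\|u\|^2-C_\varepsilon S_p\|u\|^p
=\left(\tfrac12-\varepsilon S_2\right)\|u\|^2-C_\varepsilon S_p\|u\|^p.
$$
Choosing $\varepsilon=1/(4S_2)$ and writing $\|u\|=\rho$, we get $I(u)\geq \tfrac14\rho^2-C_\varepsilon S_p\rho^p$. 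Since $p>2$, selecting $\rho>0$ sufficiently small yields $I(u)\geq \alpha:=\tfrac18\rho^2>0$ on the sphere $\|u\|=\rho$.

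\medskip
\noindent\emph{Verification of $(I_2)$.} This is the step where $(f_4)$ (rather than $(AR)$) must do the work, and it is the main technical point of the lemma. Fix any $\varphi\in H\setminus\{0\}$ (for instance $\varphi\in C_c^\infty(\R^N)$ with $\varphi\not\equiv0$) and consider the curve $t\mapsto t\varphi$ for $t>0$. Then
$$
\frac{I(t\varphi)}{t^2}=\frac12\|\varphi\|^2-\int \frac{F(x,t\varphi)}{t^2}
=\frac12\|\varphi\|^2-\int_{\{\varphi\neq 0\}}\frac{F(x,t\varphi(x))}{(t\varphi(x))^2}\,\varphi(x)^2.
$$
By $(f_4)$, for each $x$ with $\varphi(x)\neq 0$ we have $|t\varphi(x)|\to\infty$ and hence $F(x,t\varphi(x))/(t\varphi(x))^2\to +\infty$ as $t\to+\infty$. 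Because $F\geq 0$ by $(f_1)$, Fatou's lemma applied to the nonnegative integrands gives
$$
\liminf_{t\to+\infty}\int_{\{\varphi\neq 0\}}\frac{F(x,t\varphi)}{(t\varphi)^2}\,\varphi^2=+\infty,
$$
so $I(t\varphi)/t^2\to -\infty$, and in particular $I(t\varphi)\to -\infty$. Choosing $t_0$ large enough that $\|t_0\varphi\|>\rho$ and $I(t_0\varphi)\leq 0$, the element $e:=t_0\varphi$ satisfies $(I_2)$.

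\medskip
\noindent\emph{Expected difficulty.} The condition $(I_1)$ is routine once Lemma \ref{lemma1.1} is available. The only delicate point is $(I_2)$: without $(AR)$ one does not have a pointwise polynomial lower bound on $F$, so the argument has to be the pointwise Fatou-type limit above. Nonnegativity of $F$ (ensured by $(f_1)$) is what legitimizes Fatou; this is the subtle ingredient that makes $(f_4)$ a valid replacement for $(AR)$ at the level of mountain pass geometry.
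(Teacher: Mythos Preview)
Your proof is correct. The verification of $(I_1)$ is identical to the paper's. For $(I_2)$ you take a slightly different route: you divide by $t^2$ and apply Fatou's lemma to the nonnegative integrand $\dfrac{F(x,t\varphi)}{(t\varphi)^2}\,\varphi^2$, using $F\geq 0$ from $(f_1)$ to justify Fatou. The paper instead exploits the \emph{uniformity} in $(f_4)$ directly: it fixes a threshold $M$ with $F(x,t)\geq k t^2$ for all $|t|\geq M$ (with $k=2/\int\varphi^2$), restricts to the set $A_R=\{\varphi\geq M/R\}$, and computes explicitly that $\int F(x,R\varphi)\geq R^2$ for $R$ large, whence $I(R\varphi)\leq -R^2/2$. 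Your Fatou argument is shorter and only needs the pointwise limit in $(f_4)$; the paper's argument is more quantitative and makes the role of the uniform limit transparent. Both are standard and equally valid here.
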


\begin{proof} By Lemma \ref{lemma1.1} and Sobolev inequality we have
$$
\int F(x,u)\leq\varepsilon|u|_2^2+C_\varepsilon|u|_p^p\leq
c_1\varepsilon \|u\|^2+C\|u\|^p,
$$
for some $c_1>0$. Since $p>2$, we have
$$I(u)\geq\left(\frac12-c_1\varepsilon\right)\|u\|^2+o(\|u\|^2)\geq\alpha$$
for $\|u\|=\rho$ small enough. This proves $(I_1)$.

 In order to verify the condition $(I_2)$ we fix
$\varphi\in C^\infty_0(\R^N)$ satisfying $\varphi(x)\geq0$ in
$\R^N$ and $\|\varphi\|=1$. We claim that there is $R_0>0$ such
that, for any $R>R_0$, we have that $I(R\varphi)<0$. If this is
true it suffices to take $e=R \varphi$ with $R>0$ large enough to
get $(I_2)$.

For the proof of the claim we set $k=2/\int\varphi^2$ and use
$(f_4)$ to obtain $M>0$ satisfying
$$
F(x,t)\geq kt^2\ \text{for all}\ |t|\geq M.
$$
Hence, setting $A_R=\{x\in\R^N; \varphi(x)\geq M/R\}$, we get
\begin{equation} \label{desce}
\int F(x,R\varphi)\geq\int_{A_R} F(x,R\varphi)\geq k R^2
\int_{A_R} \varphi^2.
\end{equation}
Since $\varphi \geq 0$ we can choose $R_0>0$ such that, for any $R
\geq R_0$, it holds $ \int_{A_R}
\varphi^2\geq\frac12\int\varphi^2. $ It follows from the
definition of $k$ and \eqref{desce} that $\int F(x,R\varphi) \geq
R^2$ and therefore
$$
I(R\varphi)\leq\frac12R^2-R^2=-\frac12R^2<0,
$$
for any $R>R_0$.
\end{proof}

We recall that $I$ is said to satisfy the Cerami
condition at the level $c \in \mathbb{R}$ if any sequence $ (u_n)
\subseteq H$ such that
$$
\lim_{n\to+\infty} I(u_n) = c \quad \text{and} \quad
\lim_{n\to+\infty} (1 + \|u_n\|_E)\|I'(u_n)\|_{H'} = 0
$$
possesses a convergent subsequence in $H$. A sequence $(u_n)
\subseteq H$ as above is called Cerami sequence for $I$.

\begin{lemma}\label{bounded}
Suppose that $f$ satisfies $(f_1)-(f_4)$ and $(f_5)(ii)$. Then any
Cerami sequence for $I$ is bounded.
\end{lemma}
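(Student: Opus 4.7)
I plan to argue by contradiction. Suppose $\|u_n\| \to \infty$ along a subsequence and set $v_n := u_n/\|u_n\|$, so that $\|v_n\| = 1$ and, up to a further subsequence, $v_n \rightharpoonup v$ in $H$ with $v_n \to v$ almost everywhere. From $(1+\|u_n\|)\|I'(u_n)\|_{H'} \to 0$ we obtain $I'(u_n)u_n \to 0$, and combining with $I(u_n) \to c$ yields the three basic identities
\begin{equation*}
\int \widehat F(x, u_n) = c + o(1), \qquad \int \frac{F(x, u_n)}{\|u_n\|^2} = \tfrac12 + o(1), \qquad \int \frac{f(x, u_n) u_n}{\|u_n\|^2} = 1 + o(1).
\end{equation*}
The contradiction will come from a concentration/vanishing dichotomy for $(v_n)$, using $(f_4)$ in the nonvanishing case and $(f_2)$--$(f_3)$ in the vanishing case.

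\textbf{Nonvanishing alternative.} If $v \neq 0$, or more generally if there exist $(y_n) \subset \R^N$ and $\beta > 0$ with $\int_{B_1(y_n)} v_n^2 \geq \beta$, the translates $\tilde v_n(\cdot) := v_n(\cdot + y_n)$ are bounded in $H^1$ (since $\|\cdot\|$ and $\|\cdot\|_{H^1}$ are equivalent under $(V)$) and admit a subsequence converging weakly to some $\tilde v$ in $H^1$, with $\tilde v \neq 0$ by Rellich on $B_1(0)$. On $\{\tilde v \neq 0\}$, $|u_n(x+y_n)| = |\tilde v_n(x)|\|u_n\| \to \infty$ almost everywhere, so by $(f_4)$ (uniform in $x$) and $F \geq 0$ from $(f_1)$,
\begin{equation*}
\frac{F(x+y_n, u_n(x+y_n))}{\|u_n\|^2} = \frac{F(x+y_n, u_n(x+y_n))}{u_n(x+y_n)^2}\,\tilde v_n(x)^2 \longrightarrow +\infty.
\end{equation*}
Fatou's lemma and translation invariance of the Lebesgue integral then yield $\int F(y, u_n)/\|u_n\|^2 \to +\infty$, contradicting the identity $\int F/\|u_n\|^2 \to 1/2$.

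\textbf{Vanishing alternative.} Otherwise Lions' lemma gives $v_n \to 0$ in $L^s(\R^N)$ for every $s \in (2, 2^*)$, and I would show that $\int f(x,u_n) u_n/\|u_n\|^2$ can be made arbitrarily small, contradicting $\to 1$. Decompose the integral over $\{|u_n| \leq \delta\}$, $\{\delta < |u_n| \leq R_1\}$, and $\{|u_n| > R_1\}$. The first region is controlled by $(f_1)$ (choosing $\delta = \delta(\varepsilon)$ so that $|f(x,t)| \leq \varepsilon|t|$ for $|t| \leq \delta$) and contributes at most $\varepsilon/a_0$, since $\|v_n\|_2^2 \leq 1/a_0$. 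For the third region, $(f_3)$ yields $|f(x,u_n)u_n| \leq a_1^{1/\tau} u_n^2 \widehat F(x,u_n)^{1/\tau}$, and H\"older with exponents $\tau/(\tau-1)$ and $\tau$ bounds the contribution by $a_1^{1/\tau}\bigl(\int|v_n|^p\bigr)^{(\tau-1)/\tau}\bigl(\int\widehat F\bigr)^{1/\tau}$ with $p = 2\tau/(\tau-1) \in (2, 2^*)$; this vanishes since $\int|v_n|^p \to 0$ and $\int \widehat F$ is bounded. The main obstacle is the intermediate region, where neither the sublinearity at zero nor $(f_3)$ applies directly; here I would exploit $(f_2)$: the positivity and continuity of $b$ on $\delta \leq |t| \leq R_1$ give $b_0 := \min_{\delta \leq |t| \leq R_1} b(t) > 0$ and thus $\widehat F(x, u_n) \geq b_0 u_n^2$ on $\{\delta \leq |u_n| \leq R_1\}$, so $\int_{\{\delta \leq |u_n| \leq R_1\}} u_n^2 \leq b_0^{-1}\int \widehat F$ is uniformly bounded; dividing by $\|u_n\|^2 \to \infty$ forces $\int_{\{\delta \leq |u_n| \leq R_1\}} v_n^2 \to 0$, and with $|f(x,u_n)u_n| \leq C u_n^2$ on this set (from Lemma \ref{lemma1.1}) the middle contribution is $o(1)$. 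Summing the three, $1 + o(1) \leq \varepsilon/a_0 + o(1)$, which is absurd once $\varepsilon < a_0/2$.
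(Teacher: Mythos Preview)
Your argument is correct but structurally different from the paper's. The paper, following Ding--Lee, avoids the Lions dichotomy altogether: it works with a single decomposition $\Omega_n(0,a)\cup\Omega_n(a,b)\cup\Omega_n(b,\infty)$ where both cut-offs $a$ and $b$ are chosen depending on $\varepsilon$. The large-values region is controlled not via $v_n\to 0$ in $L^p$ (which the paper never uses) but by first proving, from $(f_3)$--$(f_4)$, that $g(r):=\inf\{\widehat F(x,t):|t|\ge r\}\to\infty$; since $\int\widehat F(x,u_n)$ is bounded this forces $|\Omega_n(b,\infty)|\le C/g(b)\to 0$ as $b\to\infty$ uniformly in $n$, and H\"older then makes $\int_{\Omega_n(b,\infty)}|v_n|^{2\tau'}$ small. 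Your route instead invokes Lions' lemma: in the nonvanishing alternative you get a direct contradiction from $(f_4)$ via Fatou, and in the vanishing alternative $v_n\to 0$ in $L^{2\tau/(\tau-1)}$ replaces the level-set measure estimate, letting you keep $b=R_1$ fixed. The middle zone is handled identically in both proofs (your $b_0$ is the paper's $c_a^b$). Your version separates cleanly the roles of $(f_4)$ (nonvanishing) and $(f_1)$--$(f_3)$ (vanishing), at the cost of a case split and an appeal to Lions; the paper's version is a single unified estimate that is slightly more technical but self-contained.
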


\begin{proof} We adapt here an argument from  ~\cite{ding2006multiple}. Let $(u_n)\subset H$ be
such that
$$
\lim_{n \to +\infty}I(u_n)=c \,\,\, \text{and}
\,\,\,\lim_{n\to+\infty}  (1+\|u_n\|)\|I'(u_n)\|_{H'}=0.
$$
It follows that
\begin{equation}\label{b1}
c+o_n(1)=I(u_n)-\frac12I'(u_n)u_n=\int\widehat{F}(x,u_n),
\end{equation}
where $o_n(1)$ stands for a quantity approaching zero as $n\to
+\infty$. Suppose by contradiction that, for some
subsequence still denote $(u_n)$, we have that $\|u_n\|\to\infty$.
By defining $v_n=\frac{u_n}{\|u_n\|}$ we obtain
$$
o_n(1)=\frac{I'(u_n)u_n}{\|u_n\|^2}=1-\int\frac{f(x,u_n)v_n}{\|u_n\|},
$$
and therefore
\begin{equation}\label{lim1}
\lim_{n \to +\infty} \int\frac{f(x,u_n)v_n}{\|u_n\|}=1.
\end{equation}

For any $r\geq0$ we set
$$
g(r)=\inf\{\widehat{F}(x,t); x\in\R^N, |t|\geq r\}.
$$
Let $R_1>0$ be given from $(f_3)$. For any
$|t|>R_1$, there holds
$$
a_1\widehat{F}(x,t)\geq\left(\frac{f(x,t)}{t}\right)^\tau\geq\left(\frac{2F(x,t)}{t^2}\right)^\tau.
$$
Hence, it follows from $(f_4)$ that $\widehat{F}(x,t)\to\infty$ as
$t\to\infty$ uniformly in $x\in\R^N$. This, $(f_2)$ and the
definition of $g$ imply that $g(r)>0$ for all $r>0$ and
$g(r)\to\infty$ as $r\to\infty$.

For $0\leq a<b$, we define
$$
\Omega_n(a,b)=\{x\in\R^N; a\leq|u_n(x)|<b\}
$$
and for $a>0$,
$$
c_a^b=\inf\left\{\frac{\widehat{F}(x,t)}{t^2}; x\in\R^N,
a\leq|t|\leq b\right\}.
$$
From $(f_2)$ we have that $c_a^b>0$. By using \eqref{b1}
and the above definitions we obtain
$$
\begin{array}{lcl}
c+o_n(1)&=&
\dint_{\Omega_n(0,a)}\widehat{F}(x,u_n)+\dint_{\Omega_n(a,b)}\widehat{F}(x,u_n)+\dint_{\Omega_n(b,\infty)}\widehat{F}(x,u_n) \vspace{0.2cm}\\
&\geq&
\dint_{\Omega_n(0,a)}\widehat{F}(x,u_n)+c_a^b\dint_{\Omega_n(a,b)}u_n^2+g(b)
|\Omega_n(b,\infty)|,
\end{array}
$$
and therefore, for some $C_1>0$, we have that
\begin{equation} \label{b2}
\max\left\{
\int_{\Omega_n(0,a)}\widehat{F}(x,u_n),\,c_a^b\int_{\Omega_n(a,b)}u_n^2,\,g(b)
|\Omega_n(b,\infty)|\right\} \leq C_1.
\end{equation}

The above inequality implies that $|\Omega_n(b,\infty)| \leq
C/g(b)$. Recalling that $g(b) \to +\infty$ as $b \to +\infty$ we
conclude that
\begin{equation} \label{limiteb}
\lim_{b \to +\infty}|\Omega_n(b,\infty)| = 0.
\end{equation}
Fixed $\mu\in[2,2^*)$ and $\nu\in(\mu,2^*)$,  by H\"older's
inequality and Sobolev embedding, we obtain, for some $C_2>0$,
\begin{align}
\int_{\Omega_n(b,\infty)}|v_n|^\mu&\leq\left(\int_{\Omega_n(b,\infty)}|v_n|^\nu\right)^{\mu/\nu}
|\Omega_n(b,\infty)|^{(\nu-\mu)/\nu}\nonumber\\
&\leq
C_2\|v_n\|^\mu|\Omega_n(b,\infty)|^{(\nu-\mu)/\nu}=C_2|\Omega_n(b,\infty)|^{(\nu-\mu)/\nu}\nonumber.
\end{align}
Since $\nu-\mu>0$ we conclude that
\begin{equation} \label{b3}
\lim_{b \to +\infty} \int_{\Omega_n(b,\infty)}|v_n|^\mu=0.
\end{equation}
Again from $(\ref{b2})$, for $0<a<b$ fixed, it follows that
$$
\int_{\Omega_n(a,b)}|v_n|^2=\frac1{\|u_n\|^2}\int_{\Omega_n(a,b)}u_n^2\leq
\frac1{\|u_n\|^2}\frac{C_1}{c_a^b} = o_n(1).
$$

Let $C_3>0$ be such that $|u|_2\leq C_3\|u\|$ for
all $u\in H$ and consider $\varepsilon\in(0,1/3)$. By $(f_1)$,
there exists $a_\varepsilon>0$ such that
$$
|f(x,u)|\leq\frac{\varepsilon|u|}{C_3^2}\ \text{for all}\ |u|\leq
a_\varepsilon.
$$
Hence,
\begin{align}\label{b4}
\int_{\Omega_n(0,a_\varepsilon)}\frac{f(x,u_n)v_n}{\|u_n\|}\leq\frac{\varepsilon}{C_3^2}\int_{\Omega_n(0,a_\varepsilon)}
v_n^2\leq\varepsilon.
\end{align}
Using $(f_5)$ and recalling that $h \in
L^{\infty}(\mathbb{R}^N,\mathbb{R})$ we obtain $C_4>0$ such that
$|f(x,u_n)|\leq C_4|u_n|$ for every
$x\in\Omega_n(a_\varepsilon,b_\varepsilon)$ and so,
\begin{align}\label{b5}
\int_{\Omega_n(a_\varepsilon,b_\varepsilon)}\frac{f(x,u_n)v_n}{\|u_n\|}\leq
C_4\int_{\Omega_n(a_\varepsilon,b_\varepsilon)}
v_n^2<\varepsilon,~~ \text{for all } n\geq n_0.
\end{align}

If we set $2\tau'=2\tau/(\tau-1)\in(2,2^*)$, we can use condition
$(f_3)$, $\eqref{b2}$ and H\"older's inequality to get
\begin{align}
\int_{\Omega_n(b_\varepsilon,\infty)}\frac{f(x,u_n)v_n}{\|u_n\|}&=\int_{\Omega_n(b_\varepsilon,\infty)}\frac{f(x,u_n)v_n^2}{|u_n|}\nonumber\\
&\leq\left(\int_{\Omega_n(b_\varepsilon,\infty)}\frac{|f(x,u_n)|^\tau}{|u_n|^\tau}\right)^{1/\tau}\left(\int_{\Omega_n(b_\varepsilon,\infty)}|v_n|^{2\tau'}\right)^{1/\tau'}\nonumber\\
&\leq a_1^{1/\tau}\left(\int_{\Omega_n(b_\varepsilon,\infty)}\widehat{F}(x,u_n)\right)^{1/\tau}\left(\int_{\Omega_n(b_\varepsilon,\infty)}|v_n|^{2\tau'}\right)^{1/\tau'}\nonumber\\
&\leq
C_1\left(\int_{\Omega_n(b_\varepsilon,\infty)}|v_n|^{2\tau'}\right)^{1/\tau'}
\nonumber.
\end{align}
This expression and \eqref{b3} provides $b_{\varepsilon}>0$ large
in such way that
\begin{equation}\label{b6}
\int_{\Omega_n(b_\varepsilon,\infty)}\frac{f(x,u_n)v_n}{\|u_n\|}<\varepsilon,\,\,\,\mbox{for
all } n\geq n_0.
\end{equation}

Finally, the estimates $(\ref{b4})-(\ref{b6})$ imply
$$
\int\frac{f(x,u_n)v_n}{\|u_n\|}\leq 3\varepsilon<1,
$$
which contradicts $(\ref{lim1})$. Therefore $(u_n)$ is bounded in
$H$.

\end{proof}

\begin{remark}\label{remark2}
If $f$ is periodic we can obtain the estimate in
\eqref{b5} without the condition $(f_5)$. Moreover, in this case,
it follows from periodicity and continuity of $F_0$ that
$\frac{F_0(x,u)}{u^2}\geq k=k(a,b)>0$ for all $x\in\Omega_n(a,b)$.
Of course $c_a^b\geq k>0$ and therefore the above lemma holds
under the setting of Theorem \ref{th2}.
\end{remark}

\begin{lemma}\label{nonvanishing}
Suppose that $f$ satisfies $(f_1)$ and $(f_2)$. Let $(u_n)\subset
H$ be a Cerami sequence for $I$ at level $c>0$. If
$u_n\rightharpoonup0$ weakly in $H$ then there exist a sequence
$(y_n)\subset\R^N$ and $R>0$, $\alpha>0$ such that
$|y_n|\to\infty$ and
$$
\limsup_{n\to\infty}\int_{B_R(y_n)}|u_n|^2\geq\alpha>0
$$
\end{lemma}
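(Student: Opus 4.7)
The strategy is a textbook concentration--compactness argument via P.-L.~Lions' vanishing lemma. Since any Cerami sequence is bounded by Lemma~\ref{bounded}, so is $(u_n)$. I argue by contradiction: assume the conclusion fails. I first claim this forces the vanishing alternative
$$
\lim_{n\to\infty}\sup_{y\in\R^N}\int_{B_R(y)}|u_n|^2 \;=\; 0
\quad \text{for every } R>0.
$$
Indeed, if $\sup_{y}\int_{B_R(y)}|u_n|^2\ge\alpha>0$ along a subsequence for some $R>0$, pick $y_n$ realizing this lower bound up to a factor. A bounded subsequence of $(y_n)$ would contradict Rellich compactness together with $u_n\rightharpoonup 0$ in $H$, since the balls $B_R(y_n)$ would sit inside a fixed larger ball on which $u_n\to 0$ in $L^2$. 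So $|y_n|\to\infty$ along that subsequence, producing the very conclusion we assumed to fail.

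Lions' lemma then yields $u_n\to 0$ strongly in $L^s(\R^N)$ for every $s\in(2,2^*)$. I will contradict this using the identity
$$
c + o_n(1) \;=\; I(u_n)-\tfrac12 I'(u_n)u_n \;=\; \int\widehat{F}(x,u_n),
$$
which holds for any Cerami sequence at level $c$. Combining $F\ge 0$ from $(f_1)$ with the Cerami condition gives $\|u_n\|^2\ge 2I(u_n)\to 2c>0$, so $\liminf\|u_n\|>0$. On the other hand, $I'(u_n)u_n\to 0$ reads $\|u_n\|^2=\int f(x,u_n)u_n+o_n(1)$. To contradict the preceding lower bound it suffices to show that $\int f(x,u_n)u_n\to 0$, and here I use the growth estimate from Lemma~\ref{lemma1.1}:
$$
\int |f(x,u_n)u_n| \;\le\; \varepsilon\,|u_n|_2^{\,2}+C_\varepsilon|u_n|_p^{\,p},
$$
with $p\in(2,2^*)$. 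The $L^2$-norm is bounded by Lemma~\ref{bounded} and the $L^p$-norm vanishes by Lions; sending $n\to\infty$ and then $\varepsilon\to 0$ closes the contradiction.

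The main obstacle is the reduction to the clean vanishing alternative, i.e.\ the argument that a bounded translating sequence cannot produce a positive lower bound; the remaining growth estimates are routine once Lemma~\ref{lemma1.1} is available. Hypothesis $(f_2)$ enters implicitly through Lemma~\ref{bounded}, which provides the initial boundedness of $(u_n)$ on which Lions' lemma rests.
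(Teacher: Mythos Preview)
Your proof is correct and follows the same contradiction-via-Lions strategy as the paper; the only cosmetic difference is that you conclude by showing $\|u_n\|\to 0$ while $\liminf\|u_n\|>0$, whereas the paper directly computes $c=\lim\int\widehat F(x,u_n)=0$ from $\int f(x,u_n)u_n\to 0$ and $\int F(x,u_n)\to 0$. Note also that invoking Lemma~\ref{bounded} is unnecessary, since the hypothesis $u_n\rightharpoonup 0$ already forces boundedness in $H$; with that observation $(f_2)$ plays no role in either argument.
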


\begin{proof}
Suppose, by contradiction, that the lemma is false. Then, for any
$R>0$, we have that
$$
\limsup_{n\to\infty}\int_{B_R(y)}|u_n|^2=0 \ \text{for all}\ R>0.
$$
Hence, we can use a result of Lions (see
~\cite{lions1984concentration}) to conclude that $|u_n|_s \to 0$
for any $s\in(2,2^*)$. It follows from the second inequality in
\eqref{c1} that
$$
\limsup_{n\to+\infty} \int F(x,u_n)\leq \limsup_{n\to \infty}
\left( \varepsilon \int |u_n|^2 + C_{\varepsilon} \int |u_n|^p
\right)\leq C \varepsilon,
$$
where we have used the boundedness of $(u_n)$ in
$L^2(\mathbb{R}^N)$. Since $\varepsilon$ is arbitrary we conclude
that $\int F(x,u_n) \to 0$ as $n\to+\infty$. The same argument and
the first inequality in \eqref{c1} imply that $\int f(x,u_n)u_n
\to 0$ as $n\to+\infty$.

Since $(u_n)$ is a Cerami sequence, we get
$$
c=\lim_{n\to\infty}\left[I(u_n)-\frac12 I'(u_n)u_n\right]
=\lim_{n\to\infty}\int(\frac12f(x,u_n)u_n-F(x,u_n))=0
$$
which contradicts $c>0$. The lemma is proved.
\end{proof}

We finish the section by stating two technical
convergence results. The proofs can be found in \cite[Lemmas 5.1
and 5.2]{lins2009quasilinear}, respectively.

\begin{lemma}\label{l2.4}
Suppose that $(V)$ and $(f_5)$ are satisfied. Let $(u_n)\subset H$
be a bounded sequence and $v_n(x)=v(x-y_n)$, where $v\in H$ and
$(y_n)\subset\R^N$. If $|y_n|\to\infty$, then we have
$$
[V_0(x)-V(x)]u_nv_n\to0,
$$
$$
[f_0(x,u_n)-f(x,u_n)]v_n\to0,
$$
strongly in $L^1(\R^N)$, as $n\to\infty$.
\end{lemma}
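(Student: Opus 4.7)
The plan is to handle both limits by the same splitting technique, exploiting that the coefficient $V_0-V$ (respectively $h$) belongs to the class $\mathcal{F}$, combined with the fact that the translates $v_n$ escape to infinity. As a preliminary ingredient I would first establish the following auxiliary claim: for any measurable $E\subset\R^N$ with $|E|<\infty$ and any $r\in[2,2^*)$,
$$
\int_E |v_n|^r \longrightarrow 0 \quad \text{as } n\to\infty.
$$
This is proved by approximating $v\in L^r(\R^N)$ by some $v_1\in C^\infty_0(\R^N)$: since $v_1(\cdot-y_n)$ is supported in a ball centered at $y_n$ that eventually avoids any fixed bounded set, and the portion of $E$ outside a sufficiently large ball has arbitrarily small measure, one controls $\int_E |v_1(\cdot - y_n)|^r$ by $\|v_1\|_\infty^r|E\setminus B_{R'}|$ for $|y_n|$ large, and the approximation error is $|v-v_1|_r^r$.

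For the first convergence, write $w:=V_0-V\in\mathcal{F}$ and fix $\varepsilon>0$. Set $A_\varepsilon=\{x\in\R^N:|w(x)|\geq\varepsilon\}$, which has finite measure by definition of $\mathcal{F}$. On $A_\varepsilon^c$ one has $|w|<\varepsilon$, so by Cauchy--Schwarz and boundedness of $(u_n)$ in $L^2$,
$$
\int_{A_\varepsilon^c} |w\,u_n v_n|\leq \varepsilon\, |u_n|_2 |v|_2 \leq C\varepsilon.
$$
On $A_\varepsilon$ the bound $|w|\leq\|w\|_\infty$ together with Cauchy--Schwarz gives
$$
\int_{A_\varepsilon} |w\,u_n v_n|\leq \|w\|_\infty |u_n|_2\Bigl(\int_{A_\varepsilon}|v_n|^2\Bigr)^{1/2}\longrightarrow 0
$$
by the auxiliary claim with $r=2$ and $E=A_\varepsilon$. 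Passing to the limit in $n$ first and then letting $\varepsilon\to 0$ yields the desired $L^1$ convergence.

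For the second convergence, $(f_5)(ii)$ yields the pointwise bound
$$
\bigl|[f_0(x,u_n)-f(x,u_n)]\,v_n\bigr|\leq h(x)\,|u_n|^{q-1}|v_n|,
$$
with $h\in\mathcal{F}$ and $q\in(2,2^*)$. Splitting along $B_\varepsilon=\{x:h(x)\geq\varepsilon\}$ (finite measure) and applying H\"older's inequality with exponents $q/(q-1)$ and $q$, the integral over $B_\varepsilon^c$ is bounded by $\varepsilon |u_n|_q^{q-1}|v|_q\leq C\varepsilon$ via the Sobolev embedding $H\hookrightarrow L^q$, while the integral over $B_\varepsilon$ is controlled by $\|h\|_\infty |u_n|_q^{q-1}\bigl(\int_{B_\varepsilon}|v_n|^q\bigr)^{1/q}$, which vanishes, again by the auxiliary claim (now with $r=q$). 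Since $\varepsilon>0$ is arbitrary, the conclusion follows.

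The main obstacle is really the auxiliary escape-to-infinity fact for the translates $v_n$ tested against sets of finite measure; once that is nailed down through the density of $C^\infty_0$ in $L^r$, everything else reduces to a routine H\"older/Cauchy--Schwarz manipulation combined with the defining property of the class $\mathcal{F}$.
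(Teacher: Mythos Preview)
The paper does not supply its own proof of this lemma; it simply cites \cite[Lemmas 5.1 and 5.2]{lins2009quasilinear}, so there is no in-paper argument to compare against. Your proposal is correct and is in fact the standard argument for results of this type: the splitting along the level sets $\{|w|\geq\varepsilon\}$ (respectively $\{h\geq\varepsilon\}$), which have finite measure by the definition of $\mathcal{F}$, together with the escape-to-infinity property of the translates $v_n$ on sets of finite measure, is exactly the mechanism one expects. The auxiliary claim is the heart of the matter, and your density-plus-support argument for it is sound: approximating $v$ in $L^r$ by a compactly supported $v_1$, the translate $v_1(\cdot-y_n)$ is eventually supported in $\R^N\setminus B_{R'}$, whence $\int_E|v_1(\cdot-y_n)|^r\leq\|v_1\|_\infty^r\,|E\setminus B_{R'}|$, and the latter is small since $|E|<\infty$. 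The H\"older/Cauchy--Schwarz bookkeeping in both parts is routine and uses only that $(u_n)$ is bounded in $H\hookrightarrow L^2\cap L^q$ and that $w,h\in L^\infty$.
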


\begin{lemma}
\label{l2.5} Suppose $h\in\mathcal{F}$ and $s\in[2,2^*]$. If
$(v_n) \subseteq H^1(\mathbb{R}^N)$ is such that
$v_n\rightharpoonup v$ weakly in $H$, then
$$
\lim_{n\to +\infty}\int h|v_n|^s  = \int h|v|^s.
$$
\end{lemma}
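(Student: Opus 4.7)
The plan is to exploit the defining property of $\mathcal{F}$: that $h$ is small in amplitude outside a set of finite Lebesgue measure. Given $\varepsilon>0$, set $A_\varepsilon=\{x\in\R^N:|h(x)|\geq\varepsilon\}$, which has finite measure by hypothesis, and decompose
$$
\int h\bigl(|v_n|^s-|v|^s\bigr)=\int_{A_\varepsilon}h\bigl(|v_n|^s-|v|^s\bigr)+\int_{\R^N\setminus A_\varepsilon}h\bigl(|v_n|^s-|v|^s\bigr).
$$
The strategy is to bound the second integral by $\varepsilon$, show that the first one tends to zero as $n\to\infty$, and then conclude by the arbitrariness of $\varepsilon$.

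For the complement piece, the weak convergence $v_n\rightharpoonup v$ gives $H$-boundedness, and the Sobolev embedding $H\hookrightarrow L^s(\R^N)$ (valid throughout $s\in[2,2^*]$) provides a uniform bound $|v_n|_s,|v|_s\leq C$. Since $|h|<\varepsilon$ on $\R^N\setminus A_\varepsilon$, the second term is controlled in absolute value by $2C^s\varepsilon$, which is the desired amount.

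The substantive part is the integral on $A_\varepsilon$. Using $\|h\|_\infty<\infty$ reduces the task to showing $v_n\to v$ strongly in $L^s(A_\varepsilon)$, and the main obstacle is that $A_\varepsilon$ has finite measure but need not be bounded, so Rellich--Kondrachov does not apply to it directly. I would overcome this through an additional localization: for $R>0$ large, Rellich--Kondrachov on the ball $B_R$ yields $v_n\to v$ in $L^s(B_R\cap A_\varepsilon)$ for $s<2^*$, while the tail is controlled by Hölder's inequality,
$$
\int_{A_\varepsilon\setminus B_R}|v_n-v|^s\leq |v_n-v|_{2^*}^s\,|A_\varepsilon\setminus B_R|^{1-s/2^*},
$$
combined with the fact that $|A_\varepsilon|<\infty$ forces $|A_\varepsilon\setminus B_R|\to 0$ as $R\to\infty$ uniformly in $n$, and that $|v_n-v|_{2^*}$ is uniformly bounded by Sobolev. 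Sending first $n\to\infty$, then $R\to\infty$, gives $v_n\to v$ in $L^s(A_\varepsilon)$, hence the first piece vanishes in the limit. Letting finally $\varepsilon\to 0$ yields the claim.
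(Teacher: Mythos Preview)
Your argument is correct for $s\in[2,2^*)$: the splitting into $A_\varepsilon$ and its complement, together with the two-step localization on $A_\varepsilon$ (Rellich--Kondrachov on $B_R$ plus the H\"older tail estimate exploiting $|A_\varepsilon\setminus B_R|\to 0$), is the natural and standard route. The paper does not give an in-text proof of this lemma but simply refers to \cite[Lemma~5.2]{lins2009quasilinear}, so a line-by-line comparison is not possible; your method is in any case the expected one.

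The only gap is the endpoint $s=2^*$, which the stated range includes but your proof does not cover: you invoke Rellich--Kondrachov only for $s<2^*$ (rightly, since the compact embedding into $L^{2^*}_{\mathrm{loc}}$ fails), and your H\"older tail bound degenerates to the useless factor $|A_\varepsilon\setminus B_R|^{0}=1$ when $s=2^*$. This cannot be repaired, because the assertion is in fact false at $s=2^*$: for $N\geq 3$, take $h(x)=e^{-|x|^2}\in\mathcal{F}$ and the concentrating sequence $v_n(x)=n^{(N-2)/2}\varphi(nx)$ with $0\neq\varphi\in C_0^\infty(\R^N)$; then $v_n\rightharpoonup 0$ in $H$ while $\int h|v_n|^{2^*}\to h(0)\int|\varphi|^{2^*}\neq 0$. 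Fortunately the paper only applies the lemma with the exponent $q\in(2,2^*)$ coming from $(f_5)$, so the subcritical range you have established is all that is actually needed for Theorems~\ref{th1} and~\ref{th2}.
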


\section{Proofs of the main results}

In section, we denote by $I_0:H\to\R$ the functional associated
with the periodic problem, namely
$$
I_0(u)=\frac12\int(|\nabla u|^2+V_0(x)u^2)-\int F_0(x,u).
$$
We also consider the following norm in $H^1(\mathbb{R}^N)$
$$
\|u\|_0=\left(\int(|\nabla u|^2+V_0(x)u^2\right)^2,
$$
which is equivalent to the usual norm of this space.

We are ready to prove our main theorem as follows:

\vspace{0.2cm}

\noindent \textit{Proof of Theorem \ref{th1}.} By Lemma
\ref{geometry} and the Mountain Pass Theorem there exists a
sequence $(u_n)\subset H$ such that
\begin{equation} \label{eps}
I(u_n)\to c\geq\alpha>0\ \text{and}\ (1+\|u_n\|)I'(u_n)\to0,\ \text{as}\ n\to\infty.
\end{equation}
Applying Lemma \ref{bounded}, we may assume, without loss
generality, that $u_n \rightharpoonup u$ weakly in $H$. We claim
that $I'(u)=0$. Indeed, since $C^\infty_0(\R^N)$ is dense in $H$,
it suffices to show that $I'(u)\varphi=0$ for all $\varphi\in
C^\infty_0(\R^N)$. We have
\begin{align}\label{d1}
I'(u_n)\varphi-I'(u)\varphi=o_n(1)-\int[f(x,u_n)-f(x,u)]\varphi.
\end{align}
Using the Sobolev embedding theorem we can assume that, up to a
subsequence,  $u_n\to u$ in $L^s_{loc}(\R^N)$ for each
$s\in[1,2^*)$ and
\begin{align*}
&u_n(x)\to u(x)\ \text{a.e. on}\ K, \ \text{as}\ n\to\infty,\\
&|u_n(x)|\leq w_s(x)\in L^s(K),\ \text{for every}\ n\in\mathbb{N}\ \text{and a.e. on}\ K,
\end{align*}
where $K$ denotes the support of the function $\varphi$.
Therefore,
$$
f(x,u_n)\to f(x,u)\ \text{a.e. on}\ K,\ \text{as}\ n\to\infty,
$$
and using (\ref{c1}), we get
$$
|f(x,u_n)\varphi|\leq\varepsilon|w_2||\varphi|+C_\varepsilon|w_{p-1}||\varphi|\in
L^1(K).
$$
Thus, taking the limit in (\ref{d1}) and using the Lebesgue
Dominated Convergence we get
$$
I'(u)\varphi=\lim_{n\to\infty}I'(u_n)\varphi=0,
$$
which implies $I'(u)=0$.

If $u\neq0$, the theorem is proved. So, we deal in the sequel with
the case $u=0$. By Lemma \ref{nonvanishing}, we recall that there
exist a sequence $(y_n)\subset\R^N$, $R>0$, and $\alpha>0$ such
that $|y_n|\to\infty$ as $n\to\infty$, and
\begin{equation}\label{d2}
\limsup_{n\to\infty}\int_{B_R(y_n)}|u_n|^2\geq\alpha>0.
\end{equation}
Without loss of generality we may assume that
$(y_n)\subset\mathbb{Z}^N$ (see ~\cite[page
7]{chabrowski1999weak}). Writing  $\widetilde{u}_n(x)=u_n(x+y_n)$
and observing that $\|\widetilde{u}_n\|=\|u_n\|_0$, up to
subsequence we have $\widetilde{u}_n\rightharpoonup\widetilde{u}$
in $H$, $\widetilde{u}_n\to\widetilde{u}$ in $L^2_{loc}(\R^N)$ and
for almost every $x\in\R^N$. From $(\ref{d2})$, we have
$\widetilde{u}\neq0$.

\begin{claim} $I'_0(\widetilde{u})=0$
\end{claim}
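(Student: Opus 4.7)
The plan is to test $I_0'(\widetilde{u})$ against an arbitrary $\varphi\in C_0^\infty(\mathbb{R}^N)$ and produce the value as a limit of quantities controlled by the Cerami condition on $I$. For such a $\varphi$, set $\varphi_n(x)=\varphi(x-y_n)$. Since $(V)$ and the periodicity of $V_0$ force $V\in L^\infty(\mathbb{R}^N)$, a change of variables shows that $\|\varphi_n\|$ is bounded uniformly in $n$. Combined with \eqref{eps}, this yields $I'(u_n)\varphi_n\to 0$.

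Next, I would perform the change of variables $y=x-y_n$ inside each term of $I'(u_n)\varphi_n$. Because $y_n\in\mathbb{Z}^N$, the periodicity of $V_0$ and $f_0(\cdot,t)$ together with the definition $\widetilde{u}_n(y)=u_n(y+y_n)$ lead to the identity
$$
I_0'(\widetilde{u}_n)\varphi \;=\; I'(u_n)\varphi_n + \int [V_0(x)-V(x)]\,u_n\,\varphi_n + \int [f(x,u_n)-f_0(x,u_n)]\,\varphi_n .
$$
The first term on the right-hand side tends to zero by the previous step, and, since $(u_n)$ is bounded in $H$ and $|y_n|\to\infty$, Lemma \ref{l2.4} sends each of the two remainder integrals to zero. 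Hence $I_0'(\widetilde{u}_n)\varphi\to 0$.

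Finally, I would pass to the limit on the left using the weak convergence $\widetilde{u}_n\rightharpoonup\widetilde{u}$ in $H$. Testing against $\varphi$ and $V_0\varphi\in L^2$ handles the linear pieces, while for $\int f_0(x,\widetilde{u}_n)\varphi$ I would combine the subcritical bound on $f_0$ (which follows from Lemma \ref{lemma1.1} together with $(f_5)(ii)$) with the local strong convergence $\widetilde{u}_n\to\widetilde{u}$ in $L^s_{\mathrm{loc}}(\mathbb{R}^N)$ on $\mathrm{supp}(\varphi)$, then apply Lebesgue's dominated convergence theorem exactly as in the earlier verification that $I'(u)=0$. This gives $I_0'(\widetilde{u}_n)\varphi\to I_0'(\widetilde{u})\varphi$, hence $I_0'(\widetilde{u})\varphi=0$ for every $\varphi\in C_0^\infty(\mathbb{R}^N)$. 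Density of $C_0^\infty(\mathbb{R}^N)$ in $H$ then closes the claim.

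The main obstacle is essentially bookkeeping in the decomposition identity above: one has to track the translation by $y_n\in\mathbb{Z}^N$ carefully in every term so that the periodic functions $V_0$ and $f_0$ emerge from their asymptotically periodic counterparts and the remainders appear in exactly the form required by Lemma \ref{l2.4}. Once that algebra is in place, the Cerami condition and Lemma \ref{l2.4} produce all of the vanishing, and the weak-limit step is routine.
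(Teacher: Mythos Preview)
Your argument is correct and follows the same route as the paper: translate the test function, use periodicity to write $I_0'(\widetilde{u}_n)\varphi=I_0'(u_n)\varphi_n$, split $I_0'(u_n)\varphi_n$ into $I'(u_n)\varphi_n$ plus the two remainder integrals handled by Lemma~\ref{l2.4}, and then pass to the weak limit exactly as in the verification that $I'(u)=0$. The only difference is presentational---the paper compresses the weak-limit step into the phrase ``arguing as in the beginning of the proof'' and places it first, whereas you spell it out and place it last.
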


To prove the claim we take $\varphi\in C^\infty_0(\R^N)$ and
define, for each $n \in\mathbb{N}$, $\varphi_n(x)=\varphi(x-y_n)$.
Arguing as in the beginning of the proof and using the periodicity
of $f_0$ we get
$$
I_0'(\widetilde{u}) \varphi = I_0'(\widetilde{u}_n) \varphi +
o_n(1) = I_0'(u_n) \varphi_n + o_n(1),
$$
and therefore it suffices to check that $I_0'(u_n)
\varphi_n=o_n(1)$. To achieve this objective we notice that, by
Lemma $\ref{l2.4}$,
$$
\begin{array}{lcl}
I'_0(u_n)\varphi_n &=& I'(u_n)\varphi_n
+\displaystyle\int[V_0(x)-V(x)]u_n\varphi_n-\displaystyle\int[f_0(x,u_n)-f(x,u)]\varphi_n
\vspace{0.2cm} \\ & = & I'(u_n)\varphi_n + o_n(1).
\end{array}
$$
So, by (\ref{eps}), the claim is verified.

\begin{claim}
$\liminf\limits_{n\to\infty}\displaystyle\int\widehat{F}(x,u_n)\geq\displaystyle\int\widehat{F}_0(x,\widetilde{u})$
\end{claim}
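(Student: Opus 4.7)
The idea is to decompose $\widehat{F} = \widehat{F}_0 + (\widehat{F} - \widehat{F}_0)$, show that the nonperiodic defect integrates to zero thanks to the weak-zero assumption $u_n \rightharpoonup 0$ combined with the $\mathcal{F}$-control provided by $(f_5)(ii)$, and then apply Fatou's lemma to the periodic piece after translating back by $y_n$.

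First, I would treat the defect. Hypothesis $(f_5)(ii)$ gives $|f(x,t)-f_0(x,t)| \le h(x)|t|^{q-1}$ with $h \in \mathcal{F}$ and $q \in (2,2^*)$, and integrating in $t$ yields $|F(x,t)-F_0(x,t)| \le h(x)|t|^q/q$. Combining these two bounds,
\[
|\widehat{F}(x,t) - \widehat{F}_0(x,t)| \;\le\; C\,h(x)|t|^q,
\]
with $C = \tfrac12 + \tfrac1q$. Since $u_n \rightharpoonup 0$ in $H$, Lemma~\ref{l2.5} applied to $h$ gives $\int h|u_n|^q \to 0$, whence
\[
\int \bigl[\widehat{F}(x,u_n) - \widehat{F}_0(x,u_n)\bigr]\,dx \;\longrightarrow\; 0.
\]

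Next I would use the change of variables $z = x - y_n$ together with $y_n \in \mathbb{Z}^N$ and the $1$-periodicity of $f_0$ (hence of $\widehat{F}_0$) in each coordinate to obtain
\[
\int \widehat{F}_0(x,u_n(x))\,dx \;=\; \int \widehat{F}_0(z+y_n,\widetilde{u}_n(z))\,dz \;=\; \int \widehat{F}_0(z,\widetilde{u}_n(z))\,dz.
\]
Since $\widetilde{u}_n \to \widetilde{u}$ a.e.\ and $\widehat{F}_0(x,\cdot)$ is continuous, Fatou's lemma yields $\liminf \int \widehat{F}_0(x,\widetilde{u}_n) \ge \int \widehat{F}_0(x,\widetilde{u})$, provided $\widehat{F}_0 \ge 0$. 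This positivity is the standard Nehari-type consequence of $(f_5)(iii)$: writing $\psi(s) = f_0(x,s)/|s|$, the monotonicity of $\psi$ gives $\int_0^t s\psi(s)\,ds \le \tfrac12 t^2 \psi(t)$ for $t>0$, hence $\widehat{F}_0(x,t) = \tfrac12 f_0(x,t)t - F_0(x,t) \ge 0$ for $t>0$; the case $t<0$ is analogous. Combining the vanishing of the defect with Fatou's inequality delivers the claim.

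The main obstacle is to see that the exponent $q$ and the $\mathcal{F}$-integrability of $h$ in $(f_5)(ii)$ are tailored exactly so that Lemma~\ref{l2.5} annihilates the nonperiodic part on a weakly-zero sequence; once this is recognized, the translation-and-Fatou argument on the periodic piece is routine. A minor technical point is verifying $\widehat{F}_0 \ge 0$ from the monotonicity hypothesis, but this follows from a one-line calculation as indicated.
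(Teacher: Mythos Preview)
Your proposal is correct and follows essentially the same route as the paper: bound $|\widehat{F}-\widehat{F}_0|$ by $C\,h(x)|t|^q$ via $(f_5)(ii)$, kill this defect with Lemma~\ref{l2.5} since $u_n\rightharpoonup 0$, translate using periodicity to pass from $u_n$ to $\widetilde{u}_n$, and finish with Fatou. You are in fact slightly more explicit than the paper, which leaves both the Fatou step and the nonnegativity of $\widehat{F}_0$ implicit; your derivation of $\widehat{F}_0\ge 0$ from $(f_5)(iii)$ is a welcome clarification.
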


By using $(f_5)(ii)$ and a straightforward calculation we
obtain
$$
|\widehat{F}(x,t)-\widehat{F}_0(x,t)|\leq\left(\frac12+\frac{1}{q}\right)h(x)|t|^q.
$$
Since $u_n \rightharpoonup 0$ weakly in $H$, it follows from the
above inequality and Lemma \ref{l2.5} that
\begin{align*}
\lim_{n\to\infty}\int\widehat{F}(x,u_n)&=\lim_{n\to\infty}\int\widehat{F}_0(x,u_n)\\
&=\liminf_{n\to\infty}\int\widehat{F}_0(x,\widetilde{u}_n)\geq
\int\widehat{F}_0(x,\widetilde{u}),
\end{align*}
where we also have used the periodicity of $\widehat{F}_0$.

By using \eqref{eps} and the above claim we get
\begin{align*}
c&=\lim_{n\to\infty}[I(u_n)-\frac12 I'(u_n)u_n]=\liminf_{n\to\infty}\int\widehat{F}(x,u_n)\\
&\geq
\int\widehat{F}_0(x,\widetilde{u})=I_0(\widetilde{u})-\frac12
I_0'(\widetilde{u})\widetilde{u}=I_0(\widetilde{u}),
\end{align*}
and therefore $I_0(\widetilde{u})\leq c$. It follows from
$(f_5)(iii)$ that
$\max_{t\geq0}I_0(t\widetilde{u})=I_0(\widetilde{u})$. Hence, by
the definition of $c$, $(V)$ and $(f_5)(i)$, we have that
$$
c\leq\max_{t\geq0}I(t\widetilde{u})\leq\max_{t\geq0}I_0(t\widetilde{u})=I_0(\widetilde{u})\leq c
$$
We can now invoke  Theorem \ref{lmpt} to conclude that $I$
possesses a critical point at level $c>0$. This finishes the
proof. \hfill $\Box$ \vspace{0.2cm}

We proceed now with the proof of the periodic result.

\vspace{0.2cm} \noindent \textit{Proof of Theorem $\ref{th2}$.} We
first notice that Lemmas $2.1$, $2.2$ and $2.3$ are still valid
under the assumptions of the Theorem $\ref{th2}$. Hence, by Lemma
\ref{geometry}, we obtain a sequence $(u_n)\subset H$ such that
$$
\lim_{n\to+\infty}I_0(u_n) = c_0 \quad \text{and} \quad
\lim_{n\to+\infty} (1 + \|u_n\|_0)\|I_0'(u_n)\| = 0,
$$
where $c_0$ is the mountain-pass level of $I_0$. Arguing as in the
proof of Theorem \ref{th1} we conclude that $u_n\rightharpoonup u$
weakly in $H$ with $I_0'(u)=0$.

As before, we need only consider the case $u=0$. By the Lemma
$\ref{nonvanishing}$, there is a sequence
$(y_n)\subset\mathbb{Z}^N$ (see \cite[page 7]{chabrowski1999weak}), $R>0$ and
$\alpha>0$ such that $|y_n|\to\infty$ as $n\to\infty$ and
\begin{equation}\label{**}
\limsup_{n\to\infty}\int_{B_R(y_n)}|u_n|^2\geq\alpha>0.
\end{equation}
Writing $\widetilde{u}_n(x)=u_n(x+y_n)$ and observing that
$\|\widetilde{u}_n\|_0=\|u_n\|_0$, up  to subsequence, we have
$\widetilde{u}_n\rightharpoonup\widetilde{u}$ weakly in $H$,
$\widetilde{u}_n\to\widetilde{u}$ in $L^2_{loc}(\R^N)$ and
$\widetilde{u}_n(x)\to\widetilde{u}(x)$ almost everywhere in
$\R^N$. The local convergence and \eqref{**} imply that
$\widetilde{u}\neq0$. Arguing as in Claim 1 of the proof of
Theorem \ref{th1} we conclude that $I'_0(\widetilde{u})=0$ and
therefore we obtain a nonzero weak solution.

In view of the above existence result it is well defined
$$
m=\inf\{I_0(u); u\in E \ \mbox{and}\ I'(u)=0\}>0.
$$
We claim that $m$ is achieved. Indeed, let $(u_n) \subset H$ be a
minimizing sequence for $m$, namely
$$
I_0(u_n)\to m,  \ I_0'(u_n)=0\ \mbox{and}\ u_n\neq0.
$$
Since $(u_n)$ is a Cerami sequence for $I_0$ it follows from Lemma
\ref{bounded} that it is bounded. Moreover, using $I'_0(u_n)u_n=0$
and (\ref{c1}) with $\varepsilon$ small, we can obtain $k>0$
satisfying $\|u_n\|_0\geq k$. Thus, arguing as in the preceding
paragraph, we obtain a translated subsequence $(\widetilde{u}_n)$
which has a nonzero weak limit $u_0$ such that $I'_0(u_0)=0$ and
$\widetilde{u}_n(x)\to u_0(x)$ a.e. in $\R^N$. By Fatou's lemma,
\begin{align*}
m&=\lim_{n\to\infty} I_0(u_n)=\lim_{n\to\infty} I_0(\widetilde{u}_n)\\
&=\liminf_{n\to\infty} \int\widehat{F}_0(x,\widetilde{u}_n)\\
&\geq\int\widehat{F}_0(x,u_0)=I_0(u_0).
\end{align*}
Consequently $I_0(u_0)=m$ and therefore $u_0 \neq 0$ is a ground
state solution.  \hfill $\Box$

\section*{Acknowledgments}

This is part of the author?s Ph.D. thesis, written under the supervision
of Dr. Marcelo F. Furtado at the University of Bras\'ilia. 
The author would like to express his sincere appreciation to his advisor
for his guidance and advice throughout this research.


 \end{document}